\def\cl@chapter{}
\journalname{Optimization Letters}
\newtheorem{theorem}{Theorem}[section]
\newtheorem{lemma}[theorem]{Lemma}
\newtheorem{corollary}[theorem]{Corollary}
\newtheorem{proposition}[theorem]{Proposition}
\theoremstyle{definition}
\newtheorem{definition}[theorem]{Definition}
\newtheorem{fact}[theorem]{Fact}
\theoremstyle{remark}
\newtheorem{example}[theorem]{Example}
\Crefname{lemma}{Lemma}{Lemmas}
\Crefname{proposition}{Proposition}{Propositions}
\Crefname{definition}{Definition}{Definitions}
\Crefname{proposition}{Proposition}{Propositions}
\Crefname{corollary}{Corollary}{Corollaries}
\Crefname{fact}{Fact}{Facts}
\newcommand{\RR}{\ensuremath{\mathds R}}
\def\Kpolar{K^{\circ}}
\newcommand{\Tcone}{\ensuremath{\mathcal T}}
\DeclareMathOperator{\aff}{aff}
\DeclareMathOperator{\inte}{int}
\DeclareMathOperator{\closu}{cl}
\DeclareMathOperator{\cone}{cone}
\DeclareMathOperator*{\argmin}{arg\,min}
\newcommand{\norm}[1]{\left\lVert#1\right\rVert}
\newcommand{\scal}[2]{\left\langle{#1},{#2}  \right\rangle}
\DeclareMathOperator{\circum}{circ}
\newlist{lista}{enumerate}{1}
\setlist[lista]{label=\alph*., nosep,leftmargin=*,align=right}
\newlist{listi}{enumerate}{1}
\setlist[listi]{label={\upshape(\roman*\upshape)},leftmargin=*,align=right, widest=iii,nosep, format=\bf}
\newcommand*\linenomathpatchAMS[1]{%
  \expandafter\pretocmd\csname #1\endcsname {\linenomathAMS}{}{}%
  \expandafter\pretocmd\csname #1*\endcsname{\linenomathAMS}{}{}%
  \expandafter\apptocmd\csname end#1\endcsname {\endlinenomath}{}{}%
  \expandafter\apptocmd\csname end#1*\endcsname{\endlinenomath}{}{}%
}
  \let\linenomathAMS\linenomathWithnumbers
  \patchcmd\linenomathAMS{\advance\postdisplaypenalty\linenopenalty}{}{}{}
  \let\linenomathAMS\linenomathNonumbers
\begin{document}

\title{Circumcentric directions of cones~\thanks{RB was partially supported by the \emph{Brazilian Agency Conselho Nacional de Desenvolvimento Cient\'ifico e Tecnol\'ogico} (CNPq), Grants 304392/2018-9 and 429915/2018-7; \\ YBC was partially supported by internal funds at NIU.}}

\author{Roger Behling  \and Yunier Bello-Cruz  \and 
Hugo Lara-Urdaneta \and Harry Oviedo \and Luiz-Rafael Santos 
}

\institute{
  Roger Behling   \Letter  \at School of Applied Mathematics, Funda\c{c}\~ao Get\'ulio Vargas \\ 
Rio de Janeiro, RJ -- 22250-900, Brazil. \email{rogerbehling@gmail.com}
    \and 
Yunier Bello-Cruz \at Department of Mathematical Sciences, Northern Illinois University. \\ 
 DeKalb, IL -- 60115-2828, USA. \email{yunierbello@niu.edu}
\and 
Hugo Lara-Urdaneta  \at Department of Control Engineering and Computation, Federal University of Santa Catarina. \\ 
Blumenau, SC -- 88040-900, Brazil. \email{hugo.lara.urdaneta@ufsc.br}
 \and
 Harry Oviedo \at School of Applied Mathematics, Funda\c{c}\~ao Get\'ulio Vargas \\ 
Rio de Janeiro, RJ -- 22250-900, Brazil. \email{harry.leon@fgv.br}
\and
 Luiz-Rafael Santos \at Department of Mathematics, Federal University of Santa Catarina. \\ 
Blumenau, SC -- 88040-900, Brazil. \email{l.r.santos@ufsc.br}
}
\date{}

\titlerunning{Circumcentric directions of cones}
\authorrunning{Behling, Bello-Cruz, Lara-Urdaneta, Oviedo and Santos}
\maketitle
\vspace*{-10mm}

\begin{abstract}
Generalized circumcenters have been recently introduced and employed to speed up classical projection-type methods for solving feasibility problems. In this note, circumcenters are enforced in a new setting; they are proven to provide inward directions to sets given by convex inequalities. In particular, we show that circumcentric directions of finitely generated cones belong to the interior of their polars. We also derive a measure of interiorness of the circumcentric direction, which then provides a special cone of search directions, all being feasible to the convex region under consideration.

\keywords{Circumcenter \and 
cone \and convex inequality \and feasible direction.}
\subclass{49M27 \and 65K05 \and 65B99 \and 90C25}

\end{abstract}

\section{Introduction}
\label{sec:intro}

In this paper we introduce the concept of circumcenter of a finitely generated cone $K\subset \RR^{n}$, also referred to as {\em circumcentric direction}, and we prove that it lies in $K^{\circ}$, the polar cone of $K$. This result enables us to get feasible directions to a convex set with nonempty interior of the form $\Omega\coloneqq\{x\in\RR^n \mid g(x)\le 0\}$, where $g:\RR^{n}\to \RR^{m}$ has convex and differentiable components. 

The circumcentric direction allows us to move in $\Omega$, and therefore, it might be of algorithmic interest. In fact,   the circumcentric direction   $d\in \RR^n$ with respect to $\Omega$ at a given point $\bar x\in \Omega$ is  a \emph{feasible direction to} $\Omega$  at $\bar x$, that is,  $\bar x+td\in \Omega$ for all $t>0$ sufficiently small. We will see that the circumcentric direction $d$ might be zero, but if it is nonzero, we will prove that we get an \emph{interior feasible direction}, that is also  referred to as  \emph{inward direction}. In other words, if $d\neq 0$, we have  $\bar x+td$ belonging to the interior of $\Omega$, for all small  enough $t>0$.    Our proof of this claim is derived  based on a result in \cite{Behling:2018} for generalized circumcenters. The circumcentric direction $d$ arises as the solution of a linear system of equations formed upon suitable bisectors relying on active gradients of the components of $g$. In addition to establishing that $d$ is an inward direction to $\Omega$ at the point $\bar x$, we are able to explicitly measure its level of interiorness with respect to $\Omega$. In other words, once $d$ is computed, we obtain a magnitude of how much it can be perturbed in order to still enjoy the property of pointing inwards to $\Omega$.

Circumcenters were firstly extended from elementary geometry to general Euclidean spaces in 2018 \cite{Behling:2018, Behling:2018a}. Since then, the subject of circumcenter iteration algorithms for solving feasibility problems has evolved \cite{AragonArtacho:2020, Araujo:2022, Arefidamghani:2021, Bauschke:2018, Bauschke:2020, Bauschke:2021b, Bauschke:2021d, Bauschke:2021, Behling:2020, Behling:2020a, Dizon:2022, Dizon:2022a, Lindstrom:2020, Ouyang:2018, Ouyang:2020,Ouyang:2021,Ouyang:2022,Araujo:2021,Ouyang:2021a,Behling:2021,Arefidamghani:2022,Arefidamghani:2022a}.  In all these recent references the so-called circumcentered-reflection method (CRM) was employed to speed up classical algorithms including the method of alternating projections (MAP) and the Douglas-Rachford method (DRM). The notion of circumcenter enforced in those works remains similar to the one in Euclidean geometry. For a finite set   $S \coloneqq \left\{u^{1}, u^{2}, \ldots, u^{p}\right\}$ one looks for a point  $\circum(S)$ equidistant to the points $u^i$, $i=1,\ldots,p$, and which lies on the affine subspace determined  by them, denoted here by $\aff(S)$.
We note that for an arbitrary finite set $S$, we have that $\circum(S)$ might not be well-defined. Indeed, from elementary Euclidean geometry, we know that for three distinct collinear points there is no correspondent circumcenter. On the other hand, if  $S$ comes from a round of reflections through a finite number of affine subspaces, then  \cite[Lemma 3.1]{Behling:2018} guarantees the good definition of $\circum(S)$. The good definition of circumcenters is also guaranteed when  reflections are substituted by isometries  \cite{Bauschke:2020}. Lemma 3.1 in \cite{Behling:2018} will be key for the good definition of the circumcentric direction and for deriving our results.

Having labelled the front actors of this paper allows us to have an overview of our main contributions. If $S$ is given by minus the normalized active gradients of $g$ at a point $\bar x\in \Omega$ and letting $d$ play the role of the circumcenter $\circum(S)$, what occurs is that $\bar x+td\in \Omega$, for all $t>0$ sufficiently small, as desired. If a cone $K$ is generated by the finite set $S$, we will see that $d\in K^{\circ}$. More than that, and perhaps surprisingly, $d+v$ is still in $K^{\circ}$ for all $v\in\RR^n$ satisfying $\norm{v}\leq \norm{d}^2$, where $\norm{\cdot}$ denotes the Euclidean norm. The notation $\scal{\cdot}{\cdot}$ is for the scalar product, which induces the Euclidean norm.  These results are formalized in the next section. \Cref{sec:concluding} discusses our contributions by means of examples and propositions connecting pointed cones with nonzero circumcenters. To conclude our note, at the end of \Cref{sec:concluding}, we propose some ideas for future work.

\section{Circumcentric directions}\label{sec:circumdir}

Before addressing the actual content of our work, we present some basic definitions.

\begin{definition}[polar and dual of a set]\label[definition]{def:polar_dual} Let $X\subset \RR^{n}$. The \emph{polar} of  $X$ is a set of $\RR^{n}$ defined by
\[
X^{\circ} \coloneqq \{w\in \RR^{n}\mid \scal{z}{w}\le 0, \forall z\in X\}.
\]
 The \emph{dual} of $X$ is given by
\(
X^{*} \coloneqq -X^{\circ} 
\).

\end{definition}
 
Of course,  $X^{\circ}$ and $X^{*}$  are nonempty for the  zero vector lies in  them. Next, let us recall some notions regarding cones.

\begin{definition}[cone] \label[definition]{def:cone}
A subset $K$ of $\RR^{n}$ is called a \emph{cone} if it is closed under positive scalar multiplication, \emph{i.e.}, $t w \in K$ when $w \in K$ and $t>0$.  A \emph{convex cone} is a cone that is a convex set.

\end{definition}

\Cref{def:cone,def:polar_dual} clearly give us  that for any set $X\subset\RR^{n}$ both $X^{\circ}$ and $X^{*}$ are always cones that are closed and convex.

\begin{definition}[pointed cone]
  A cone $K\subset \RR^n$ is said to be \emph{pointed} if $K\cap (-K) = \{0\}.$
  \end{definition}
  
There are equivalent statements for what is understood as a pointed cone $K$. Two of them are: $ K$ contains no lines \cite[Thm.~4.12]{Bertsimas:1997}; and $\inte(\Kpolar)$ is nonempty  \cite[p.~213, 3.6(d)]{Bertsekas:2003} and \cite[Prop.~2]{Studeny:1993}.

For a finite number of vectors $w^{1}, \ldots, w^{p}\in \RR^n$, a sum of the form  $\sum_{i=1}^{p} \lambda_{i} w^{i}$ is said to be a \emph{conic combination} of $w^{1}, \ldots, w^{p}$ if the scalars $\lambda_{1}, \ldots, \lambda_{p}$ are all nonnegative. 
The \emph{cone generated by $X\subset \RR^n,$} denoted by $\cone(X)$, is the set of all conic combinations of elements of $X$. Note that $\cone(X)$ is convex and, if $X$ is nonempty,  $\cone(X)$ contains the zero vector.

\begin{definition}[finitely generated cone]
A cone $K$ of $\RR^{n}$ is said to be finitely generated if there exists a finite set $S\subset \RR^{n}$ such that $0\notin S$ and $\cone(S) = K$. A \emph{conic base} of a finitely generated cone $K$ is a finite set $B_{K}\subset\RR^{n}$ with minimal cardinality such that $\cone(B_{K}) = K$.

\end{definition}

In addition to being convex, a finitely generated cone is also closed.  The definitions and remarks above can be found, for instance, in \cite{Bertsekas:2003}. 

We recall that a set $\Omega\subset \RR^{n}$ is  a (convex) \emph{polyhedron}, if it can be expressed as the intersection of a finite family of closed half-spaces, that is, 
\[\label{eq:polyhedron}
  \Omega \coloneqq \{x \in \RR^{n} \mid \scal{a^i}{x} \leq b_i, a^{i}\in\RR^{n}, b_{i}\in\RR,  \text{ for } i=1,\ldots, m\}.
\]
With that said, we can define a \emph{polyhedral cone}, which is a set that is simultaneously a cone and a polyhedron. It is well known that a cone is polyhedral if, and only if, it is finitely generated. This can be seen as a corollary of the  Minkowski-Weyl theorem; see, for instance,~\cite[Thm.~3.52]{Rockafellar:2004}. 

Another important concept regarding cones is the one of tangent cones. 

\begin{definition}[{Tangent cone \cite[Prop.~A.5.2.1]{Hiriart-Urruty:2001}}]\label{def:tangent_cone} 
  Let $X$  be a nonempty closed convex set in $\RR^{n}$ and $ x\in X$. The \emph{tangent cone} of  $X$ at $ x$ is given by 
    \[
      \Tcone_{X}( x)\coloneqq \closu\left(\left\{ \lambda(y- x) \in \RR^{n}\mid y \in X, \lambda \geq 0 \right \}\right).
    \]
    \end{definition}

The next result states that a tangent cone of a polyhedron is also a polyhedron.

\begin{fact}[Tangent cone of polyhedron {\cite[Thm. 6.46]{Rockafellar:2004}}]\label[fact]{fact:tangent-cone-poly}
  Let $\Omega\subset \RR^{n}$ be a polyhedron defined as in \cref{eq:polyhedron}.  Then,   the tangent cone $\Tcone_{\Omega}(\bar x)$, at any point $\bar x\in \Omega$, is a polyhedral cone and can be represented as 
  \[
  \Tcone_{\Omega}(\bar x) = \{w \in \RR^{n} \mid \scal{a^i}{w}\le 0, \text{ for } i \in J(\bar x)\},
  \]
  where  $J(\bar x) \coloneqq\{i  \mid \scal{a^i}{\bar x} = b_i \}$ is the active index set of $\Omega$ at $\bar x$.
  \end{fact}

We now define the circumcenter of a finite set $S\subset \RR^n$. Remind that $\aff(S)$ stands for the smallest affine subspace containing $S$.

\begin{definition}[circumcenter of $p$ points]
Let $S\coloneqq \left\{u^{1}, u^{2}, \ldots, u^{p}\right\},$ where $m$ is a positive integer  and $u^{1}, u^{2}, \ldots, u^{p}$ are in $\RR^{n}$. A vector $d\in \RR^{n}$ is called the \emph{circumcenter of $S$} if it satisfies the following two conditions:

\begin{listi}
	\item $d \in \aff(S)$, and
	\item $\left\|d-u^{1}\right\|=\left\|d-u^{2}\right\|=\cdots=$
$\left\|d-u^{p}\right\|$.
\end{listi}
 \end{definition} 

Our notation for  a generalized circumcenter of a set   $S \coloneqq \left\{u^{1}, u^{2}, \ldots, u^{p}\right\}$ is $\circum(S)$. Clearly, if $S=\{u^{1}\}$, then $\circum(S) = u^{1}$ and if $S=\{u^{1},u^{2}\}$, we have $\circum(S) = \frac{1}{2}(u^{1} + u^{2})$. In the cases where $p>2$, $\circum(S)$ may not exist; see a discussion on properness of generalized circumcenters in  \cite{Bauschke:2018}. If $\circum(S)$ exists, it must be unique \cite[Prop.~3.3]{Bauschke:2018}  and reads as 
\[\label{eq:CircComputation}
\circum(S)=u^{1}+\alpha_1\left(u^{2}-u^{1}\right) + \cdots + \alpha_{p-1}\left(u^{p}-u^{1}\right), 
\]
where $(\alpha_1,\ldots,\alpha_{p-1})\in \RR^{p-1}$ is any  solution of the $p-1\times p-1$ linear system of equations  whose $i$-th row is given by 
\[\label{eq:CircMatrix}
  \sum_{j=1}^{p-1} \alpha_{j}\left\langle u^{j+1}-u^{1}, u^{i+1}-u^{1}\right\rangle=\frac{1}{2}\left\|u^{i+1}-u^{1}\right\|^{2}.
\]
The $p-1$ equations  \cref{eq:CircMatrix} determine a Gram matrix~\cite{Behling:2018a,Bauschke:2018}.


We will see next that a sufficient condition for $\circum(S)$ to exist is that all vectors in $S$ have the same length. Moreover, in this case $\circum(S)$   will be characterized as the orthogonal projection of the origin onto $\aff\left(S\right)$. 

Recall that the orthogonal projection of a point $y\in \RR^{n}$ onto a closed and convex set $X\subset \RR^{n}$  is given  by  $P_{X}(y)\in X$ if, and only if, $\scal{z-P_{X}(y)}{y - P_{X}(y)}\le 0,$ for all $z\in X$. This is, of course, equivalent to   \(P_{X}(y) \coloneqq \argmin_{z\in X} \{\norm{y - z } \}.\)

\begin{lemma}[characterization of circumcenters] \label[lemma]{lema:circum}
Let $S\coloneqq \{u^{1}, u^{2}, \ldots, u^{p}\}\subset\RR^{n},$ where $p$ is a positive integer, and assume that all the vectors $u^i$, $i=1,\ldots,p$, have the same length $\eta\geq 0$. Then, the circumcenter of $S$, $\circum(S)$ is proper and, moreover,
\begin{listi}
  \item $\circum(S) = P_{\aff(S)}(0)$;
  \item $\scal{\circum(S)}{u^i}= \norm{\circum(S)}^2  \geq 0$  for all $i=1,\ldots,p$;
  \item If $\eta>0$, then  $\scal{\circum(S)+v}{u^i} \geq 0,$ for all $v\in\RR^{n}$ such that $\norm{v}\leq \frac{\norm{\circum(S)}^2}{\eta}$ and $i=1,\ldots,p$.
\end{listi}
\end{lemma}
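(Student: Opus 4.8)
The plan is to prove all three items at once by identifying the circumcenter explicitly with the orthogonal projection $c \coloneqq P_{\aff(S)}(0)$ and then exploiting the fact that $\aff(S)$ is a genuine affine subspace, not merely a convex set. First I would invoke the variational characterization of the projection recalled just before the lemma: with $y = 0$ and $X = \aff(S)$, it reads $\scal{z - c}{-c} \le 0$ for every $z \in \aff(S)$, equivalently $\scal{z-c}{c}\ge 0$. Since $\aff(S)$ is affine, the point $2c - z$ is again an affine combination of points of $\aff(S)$ (its coefficients sum to $1$) and hence also lies in $\aff(S)$; substituting it into the characterization yields $\scal{z-c}{c}\le 0$. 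Combining the two inequalities sharpens the statement to the orthogonality relation $\scal{z-c}{c}=0$ for all $z \in \aff(S)$. Taking $z = u^i \in S \subset \aff(S)$ then gives $\scal{u^i}{c} = \scal{c}{c} = \norm{c}^2$ for every $i$, which is precisely item (ii), the nonnegativity being immediate.

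To establish item (i), I would check that $c$ meets the two defining conditions of $\circum(S)$. The first condition, $c \in \aff(S)$, holds by construction. For the equidistance condition, I expand $\norm{c - u^i}^2 = \norm{c}^2 - 2\scal{c}{u^i} + \norm{u^i}^2$. Because all the $u^i$ share the common length $\eta$, and because the inner products $\scal{c}{u^i}$ are all equal to $\norm{c}^2$ by the identity just derived, the right-hand side is independent of $i$; thus $c$ is equidistant from all the $u^i$. Hence $c$ satisfies both conditions, so $\circum(S)$ is proper, and by the uniqueness of generalized circumcenters \cite[Proposition 3.3]{Bauschke:2018a} we conclude $\circum(S) = c = P_{\aff(S)}(0)$.

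Finally, for item (iii) I would estimate directly. Assuming $\eta > 0$, for any $v \in \RR^n$ and any index $i$, item (ii) and the Cauchy--Schwarz inequality give
\[
  \scal{\circum(S) + v}{u^i} = \norm{\circum(S)}^2 + \scal{v}{u^i} \ge \norm{\circum(S)}^2 - \norm{v}\,\norm{u^i} = \norm{\circum(S)}^2 - \eta\,\norm{v}.
\]
Imposing $\norm{v} \le \norm{\circum(S)}^2/\eta$ makes the last expression nonnegative, which is exactly the claimed inequality.

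The only step calling for genuine care is the passage from the one-sided projection inequality to the two-sided orthogonality equality; this is where the affine (as opposed to merely convex) structure of $\aff(S)$ is essential, since it is what forces the inner products $\scal{c}{u^i}$ to be constant. Once that constancy is in hand, the equal-length hypothesis converts the projection of the origin into the equidistant point, and the remainder is routine bookkeeping, so I do not anticipate a substantive obstacle.
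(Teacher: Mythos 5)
Your proof is correct, and it takes a genuinely different route from the paper's for the crucial step, namely properness together with item (i). The paper establishes these by invoking Lemma 3.1 of \cite{Behling:2018}: since the $u^i$ share a common length, each $u^{j+1}$ is the reflection of $u^j$ across the line $V_j$ joining the origin to the midpoint $\tfrac{1}{2}(u^j+u^{j+1})$, and since $0\in\cap_j V_j$, that lemma yields at once that $\circum(S)$ exists and equals $P_{\aff(S)}(0)$. You instead verify directly that $c\coloneqq P_{\aff(S)}(0)$ satisfies the two defining conditions of the circumcenter: you derive the orthogonality relation $\scal{z-c}{c}=0$ for all $z\in\aff(S)$ from the convex-set projection inequality (correctly exploiting the affine structure by substituting $2c-z$, which is the step that genuinely needs care and which you handle properly), conclude $\scal{c}{u^i}=\norm{c}^2$ for every $i$, and then use the equal-length hypothesis to get $\norm{c-u^i}^2=\norm{c}^2-2\norm{c}^2+\eta^2$, independent of $i$; uniqueness \cite[Proposition 3.3]{Bauschke:2018a} then identifies $\circum(S)$ with $c$. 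The trade-off is clear: the paper's argument is shorter given the imported reflection lemma and ties the result to the circumcentered-reflection framework it builds on, whereas yours is self-contained and elementary, needing only the variational characterization of projections recalled before the lemma and the already-cited uniqueness of circumcenters. Items (ii) and (iii) match the paper's treatment: the paper likewise reads off (ii) from the orthogonality relation for projections onto affine sets (stated there as an equality, whereas you take the extra step of deriving that equality from the convex case) and proves (iii) by the same Cauchy--Schwarz estimate.
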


\begin{proof}
    Note that by proving (i), we automatically get the well-definedness of the circumcenter. So, let us proceed with the proof of this item. The idea is to employ Lemma 3.1 from \cite{Behling:2018}, which has the Pythagorean theorem at its core. For this, we are going to define the suitable subspaces $V_1,\ldots, V_{p-1}$, where each $V_j$ is the line connecting the origin with $\frac{u^j + u^{j+1}}{2}$, where $j=1,\ldots, p-1$. Then, due to the fact that $u^1,u^2,\ldots, u^p$ have the same length, it is easy to see that $R_{V_j}(u^j) =   u^{j+1}$, for all $j\in\{1,2,\ldots, p-1\}$. Indeed, 
    \begin{align}
      R_{V_j}(u^j) & = 2 P_{V_j}(u^j) - u^j  = 2\left(\frac{u^j + u^{j+1}}{2}\right) - u^j  = u^{j+1},
    \end{align}
    where the second equality follows from properties  of projections onto subspaces, as \[\scal{u^j - \frac{u^j + u^{j+1}}{2}}{\frac{u^j + u^{j+1}}{2}} =  \scal{ \frac{u^j - u^{j+1}}{2}}{\frac{u^j + u^{j+1}}{2}} = \frac{1}{4} \left(\norm{u^{j}}^2 - \norm{u^{j+1}}^2\right) = \frac{1}{4} \left(\eta^2 - \eta^2 \right) = 0.\]
    Since $0\in \cap_{i=1}^{p} V_i$, Lemma 3.1 from \cite{Behling:2018} directly provides $\circum(S) = P_{\aff(S)}(0)$, which proves item (i).
    
    From the characterization of projections onto affine sets, we have, for all $z\in \aff(S)$, that
    \begin{equation}\label{eq_affine}
    \scal{z - \circum(S)}{0 - \circum(S)} = 0, 
    \end{equation}
  which implies that 
    \[\scal{\circum(S)}{z} = \norm{\circum(S)}^2 \geq 0.\]
    Since all $u^i$'s are in $\aff(S)$, item (ii) follows. 
    
    Item (iii) is a consequence of the manipulations presented below. Let $v$ be any vector in $\RR^n$ satisfying $\norm{v}\leq \frac{\norm{\circum(S)}^2}{\eta}$. Then, 
    \begin{align}
    \scal{\circum(S)+v}{u^i} 
                             & =  \scal{\circum(S)}{u^i - \circum(S) + \circum(S)} + \scal{v}{u^i}  \\
                             & =  \scal{\circum(S)}{u^i - \circum(S)} + \norm{\circum(S)}^2 + \scal{v}{u^i}  \\
                             & =  \norm{\circum(S)}^2 + \scal{v}{u^i}  \\
                             & \geq   \norm{\circum(S)}^2 - \norm{v}\norm{u^i}  \\
                             & =  \norm{\circum(S)}^2 - \norm{v}\eta.  \\
                             & \geq  \norm{\circum(S)}^2 - \frac{\norm{\circum(S)}^2}{\eta}\eta  \\ 
                             & =  0.
    \end{align}
    The first two equalities above follow from inner product properties. The third one is by item (ii). The first inequality is due to Cauchy-Schwarz and the fourth equality comes from the definition of $\eta$. The second inequality is obtained by employing the hypothesis on the norm of $v$, which then yields the result. 
\end{proof}

  The previous lemma is key for developing the theory of circumcenters of cones,
  which begins with the following definition.

\begin{definition}[circumcentric direction of polyhedral cones]\label[definition]{def:circdirection}
If $K\subset \RR^{n}$ is a nontrivial polyhedral cone and  $B_{K}\coloneqq \{u^{1},\ldots,u^{p}\}$ is a normalized conic base of $K$, \emph{i.e.}, a conic base whose vectors have all norm equal to one, we say that $d  \coloneqq -\circum(B_{K})$ is the \emph{circumcentric direction} of $K$. If $K=\{0\}$, the \emph{circumcentric direction} $d$ is zero. We also use the notation $d_{\circum}(K)$ for $d$.
\end{definition}

The next proposition establishes the well-definedness of the circumcenter of a polyhedral cone.

\begin{proposition}[good definition of circumcentric direction of polyhedral cones] \label[proposition]{prop:conic_base}
Having  a polyhedral cone $K\subset \RR^{n}$, its circumcentric direction exists and is unique.  
\end{proposition}

\begin{proof}
  If $K = \{0\}$, the circumcentric direction is zero by definition. So, let $K$ be nontrivial. 
  Suppose first that $K$ is not pointed.  Then, there exists  $x\in K\cap(-K)$ such that $x\neq 0$, and thus we get
  \[\label{eq:prop_xpos}
    x = \sum_{j=1}^{p}  \alpha_ju^j, \alpha_j \geq 0, j=1\ldots,p,
  \]
with $\sum_{j=1}^{p}  \alpha_j >0$. 
  Moreover, we have that $-x \in K$, that is, 
  \[\label{eq:prop_xneg}
    -x = \sum_{j=1}^{p}  \beta_ju^j, \beta_j \geq 0, j=1\ldots,p,
  \]
  where $\sum_{j=1}^{p}  \beta_j >0$. Summing \cref{eq:prop_xpos} and 
\cref{eq:prop_xneg} and dividing by $\sum_{j=1}^{p} (\alpha_j +  \beta_j)>0$ we have  
\[
0 = \frac{\sum_{j=1}^{p} (\alpha_j +  \beta_j) u^j}{\sum_{j=1}^{p} (\alpha_j +  \beta_j)}.
\]
Because the coefficients on the right-hand side of the last equation add up to $1$, we get that $0$ belongs to $\aff(B_K)$, and  \Cref{lema:circum}(i) yields $\circum(B_K) = P_{\aff(B_K)}(0) = 0$.

Assume now that $K$ is pointed  and let $B_{K}\coloneqq \{u^{1},\ldots,u^{p}\}$ be a normalized conic base of $K$.  
We remark that what we call here  a conic base is referred to in \cite[pg. 176]{Bertsimas:1997}  as a \emph{complete set of extreme rays}, and thus  every element of a conic base must be a  generator of an extreme ray; see \cite[Def.~4.2]{Bertsimas:1997}.  Then, there exists only one set $B_K$ that we can call \emph{normalized conic base} of $K$, that is, a conic base where  all its elements  have norm equal to $1$. Thus, uniqueness of the circumcentric direction  is guaranteed.
The existence is due to \Cref{lema:circum}(i).   
\end{proof}

We just have seen that the circumcentric direction of a polyhedral cone is  well-defined. 
Note further that  the circumcentric direction must have size of at most $1$ because we ask for normalized generators in the definition. 
In fact, what really matters for the good definition of the circumcentric direction is that the generators have all the same size. We take them with norm one just for simplicity. 

We proceed next by showing that the circumcentric direction $d$ of a polyhedral cone belongs to its polar. In addition to that, we provide a ball centered in $d$ with radius $\norm{d}^2$ whose points still lie in the polar. This elegant measure of interiorness is stated in the following theorem.

\begin{theorem}[properties of circumcentric directions of polyhedral cones]\label{Thm:circumcentric_d_cones}
Let $K\subset \RR^{n}$ be a polyhedral cone and $d\in \RR^{n}$ its circumcentric direction. Then, $d+v\in \Kpolar$, for all $v\in\RR^{n}$ such that $\norm{v}\leq \norm{d}^2$ and, in particular, $d\in \Kpolar$. Moreover, if $d\neq 0$ then $d+v \in \inte (\Kpolar)$, for all $v\in\RR^{n}$, whenever  $\norm{v}< \norm{d}^2$.
\end{theorem}
\begin{proof}
Let $B_{K} \coloneqq \{u^1,\ldots,u^p\}$ be a normalized conic base of $K$ and set $d\coloneqq - \circum(B_{K})$. So, item (iii) of \Cref{lema:circum} applies with $B_{K}$ playing the role of $S$ and $\eta=1$. Therefore,  $\scal{d+v}{u^i}\le 0$ for all $i=1,\ldots,p$, if $\|v\|\le \|d\|^2$. For any $z\in K$, we get $\scal{d+v}{z}\le 0$ because $z$ is a conic combination of the $u^i$'s. Hence, $d+v\in \Kpolar$ and, in particular, $d \in \Kpolar$.  Now, if $d\neq0$ and $\|v\|< \|d\|^2$, we have
$  \scal{d+v}{u^i}   \leq   \norm{v} - \norm{d}^2  < 0$, for all $i=1,\dots,p$. So, $\scal{d+v}{z}<0$, for all $z\in K$, which implies  that  $d+v \in  \inte(\Kpolar)$. 
\end{proof}

The theorem we just established can be used to get inward directions for convex regions. In this regard, we start providing feasible directions for polyhedral sets. 

\begin{corollary}\label[corollary]{cor:Ax le b}
Let $\Omega=\{x\in\RR^n \mid Ax\le b\}$ where $b\in\RR^m$, $A\in \RR^{m\times n}$ has nonzero rows $a^1,\ldots,a^m$, and $\bar x$ be a given point in $\Omega$. Define \[J(\bar x)\coloneqq\{j\in\{1,\ldots,m\}\mid \scal{a^j}{\bar x}=b_j\} \quad \mbox{and} \quad d\coloneqq -\circum\left(\cone (\{a^j\}_{j\in J(\bar x)})\right).\] Then, there exists $\delta>0$ such that $\bar x+t(d+v)\in\Omega$ for all $t\in[0,\delta]$ and $v\in \RR^n$ satisfying $\|v\|\le \|d\|^2$. 
\end{corollary}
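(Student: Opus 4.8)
The plan is to split the constraints defining $\Omega$ according to whether they are active at $\bar x$ and to treat the two groups separately. Write $K \coloneqq \cone(\{a^j\}_{j\in J(\bar x)})$; since the $a^j$ are finitely many and nonzero, $K$ is a polyhedral cone, so $d = d_{\circum}(K)$ is well-defined by \Cref{def:circdirection} and \Cref{Thm:circumcentric_d_cones} applies directly. That theorem is where essentially all the work has already been done: it gives, for every $v\in\RR^n$ with $\norm{v}\le\norm{d}^2$, that $d+v\in\Kpolar$. Because $a^j\in K$ for each $j\in J(\bar x)$, the definition of the polar then yields $\scal{a^j}{d+v}\le 0$ for all active indices $j$ and all admissible $v$ simultaneously.

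For the active constraints I would then argue immediately: fix $j\in J(\bar x)$, so $\scal{a^j}{\bar x}=b_j$, and compute, for any $t\ge 0$ and any admissible $v$,
\[
\scal{a^j}{\bar x + t(d+v)} = b_j + t\,\scal{a^j}{d+v} \le b_j,
\]
since $t\ge 0$ and the inner product is nonpositive. Thus the active inequalities persist for \emph{every} $t\ge 0$, with no upper bound on the step needed.

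The inactive constraints are where a positive, finite step size must be chosen, and obtaining a $\delta$ valid uniformly over the whole ball $\norm{v}\le\norm{d}^2$ is the main point to be careful about. For $j\notin J(\bar x)$ the slack $s_j\coloneqq b_j-\scal{a^j}{\bar x}$ is strictly positive, and I would write
\[
\scal{a^j}{\bar x + t(d+v)} = b_j - s_j + t\,\scal{a^j}{d+v}.
\]
To control the last term independently of $v$, I would invoke Cauchy--Schwarz together with $\norm{v}\le\norm{d}^2$ to obtain $\scal{a^j}{d+v}\le\norm{a^j}\bigl(\norm{d}+\norm{d}^2\bigr)$, a bound free of $v$. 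Setting $\delta \coloneqq \min_{j\notin J(\bar x)} s_j\big/\bigl(\norm{a^j}(\norm{d}+\norm{d}^2)\bigr)$ (and taking any $\delta>0$ when the inactive set is empty or $d=0$), one gets $t\,\scal{a^j}{d+v}\le s_j$ for $t\in[0,\delta]$, hence $\scal{a^j}{\bar x+t(d+v)}\le b_j$. Combining the two cases gives $\bar x+t(d+v)\in\Omega$ for all $t\in[0,\delta]$ and all $v$ with $\norm{v}\le\norm{d}^2$.

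The only genuine obstacle is this uniformity of $\delta$ over the perturbation ball, which the Cauchy--Schwarz estimate resolves cleanly; everything else reduces to reading off \Cref{Thm:circumcentric_d_cones}. I would also flag the degenerate cases explicitly: if $J(\bar x)=\emptyset$ then $K=\{0\}$ and $d=0$, which forces $v=0$ and makes the claim trivial, and the same $d=0$ reasoning covers the situation in which the denominator above would vanish.
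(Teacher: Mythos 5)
Your proof is correct, but it takes a genuinely different route from the paper's. The paper's proof is abstract and very short: after obtaining $d+v\in\Kpolar$ from \Cref{Thm:circumcentric_d_cones} (exactly as you do), it invokes the fact that a polyhedron coincides locally with its tangent cone at $\bar x$ --- citing an external result (Fact 2.9 of the authors' earlier paper on infeasibility and error bounds) --- together with the identification of that tangent cone with $\Kpolar$, so that $\Omega\cap B_\delta(\bar x)=(\bar x+\Kpolar)\cap B_\delta(\bar x)$ and the conclusion drops out. You instead verify the constraints directly, splitting them into active and inactive: active constraints persist for \emph{all} $t\ge 0$ because $\scal{a^j}{d+v}\le 0$, while the inactive ones are handled by a Cauchy--Schwarz bound $\scal{a^j}{d+v}\le\norm{a^j}\bigl(\norm{d}+\norm{d}^2\bigr)$ that is uniform over the perturbation ball, yielding an explicit step size $\delta$. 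Your argument buys self-containedness (no external tangent-cone fact), an explicit formula for $\delta$, uniformity over the ball $\norm{v}\le\norm{d}^2$ made fully explicit (the paper leaves this implicit in ``the result follows''), and the sharper observation that the active inequalities impose no bound on $t$ at all; the paper's argument buys brevity and a conceptual framing via tangent cones that lines up with how the subsequent corollary for general convex inequalities is proved. Your explicit treatment of the degenerate cases ($J(\bar x)=\emptyset$ or $d=0$, where $\norm{v}\le\norm{d}^2$ forces $v=0$) is a welcome addition that the paper's proof silently subsumes.
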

\begin{proof}
By setting $K\coloneqq\cone (\{a^j\}_{j\in J(\bar x)})$ we get from \Cref{Thm:circumcentric_d_cones} that $d+v\in \Kpolar$ with $\|v\|\le \|d\|^2$. Now, it is well-known (see \cite[Fact 2.9]{Behling:2021a}) that the polyhedron $\Omega$ coincides locally with its tangent cone at $\bar x$, that is, there exists $\delta>0$ such that $\Omega\cap B_\delta(\bar x)=(\bar x+\Tcone_{\Omega}(\bar x))\cap B_\delta(\bar x)$. Furthermore,  because of \Cref{def:polar_dual,fact:tangent-cone-poly},  $\Tcone_{\Omega}(\bar x)$  is precisely $\Kpolar$. Thus, we have $\Omega\cap B_\delta(\bar x)=(\bar x+\Kpolar)\cap B_\delta(\bar x)$ and the result follows. 
\end{proof}
We now employ \Cref{Thm:circumcentric_d_cones} to derive feasible directions upon the circumcentric direction for more general convex sets.
\begin{corollary}\label[corollary]{cor:g(x) le 0}
Let $\Omega=\{x\in\RR^n \mid g(x)\le 0\}$ where $g:\RR^n\to \RR^m$ has convex and differentiable components, and $\bar x$ be a given point in $\Omega$. Assume also that $\Omega$ satisfies the Slater condition, \emph{i.e.}, there exists $\hat x$ such that $g(\hat x)<0$. Define $J(\bar x)\coloneqq\{j\in\{1,\ldots,m\}\mid g_j(\bar x)=0\}$ and $d=-\circum\left(\cone (\{\nabla g_j(\bar x)\}_{j\in J(\bar x)})\right)$. Then, for each $v\in \RR^n$ satisfying $\|v\|< \|d\|^2$, there exists $\delta_v>0$ such that $\bar x+t(d+v)\in\Omega$ for all $t\in[0,\delta_v]$. 
\end{corollary}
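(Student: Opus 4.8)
The plan is to reduce the general convex case to the polyhedral situation already handled, using convexity and differentiability of the components $g_j$ to linearize $\Omega$ locally at $\bar x$. First I would set $K\coloneqq\cone\left(\{\nabla g_j(\bar x)\}_{j\in J(\bar x)}\right)$, so that \Cref{Thm:circumcentric_d_cones} immediately gives $d+v\in\Kpolar$ whenever $\norm{v}\le\norm{d}^2$; in particular $\scal{\nabla g_j(\bar x)}{d+v}\le 0$ for every active index $j\in J(\bar x)$. Since the corollary only assumes $\norm{v}<\norm{d}^2$ (strict inequality), I would aim to extract a \emph{strict} inequality $\scal{\nabla g_j(\bar x)}{d+v}<0$ for the active constraints, which is the quantitative gain needed to push genuinely into the interior rather than merely along the boundary. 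This strictness should come from revisiting the chain of inequalities in \Cref{lema:circum}(iii): with $\eta=1$ and $\norm{v}<\norm{d}^2=\norm{\circum(B_K)}^2$, the final Cauchy--Schwarz step yields $\scal{d+v}{u^i}\le -(\norm{d}^2-\norm{v})<0$, so the slack is strictly negative for each active gradient.

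Next I would treat the two types of constraints separately. For an \emph{inactive} index $j\notin J(\bar x)$ we have $g_j(\bar x)<0$, so by continuity of $g_j$ there is a neighborhood of $\bar x$ on which $g_j<0$; choosing $t$ small keeps $\bar x+t(d+v)$ in this neighborhood, hence $g_j(\bar x+t(d+v))\le 0$. For an \emph{active} index $j\in J(\bar x)$, I would use convexity and differentiability: the first-order Taylor expansion gives
\begin{equation}\label{eq:taylor-active}
g_j(\bar x+t(d+v)) = g_j(\bar x) + t\scal{\nabla g_j(\bar x)}{d+v} + o(t) = t\scal{\nabla g_j(\bar x)}{d+v} + o(t),
\end{equation}
using $g_j(\bar x)=0$. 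Because the directional derivative $\scal{\nabla g_j(\bar x)}{d+v}$ is \emph{strictly} negative, the linear term dominates the $o(t)$ remainder for all sufficiently small $t>0$, forcing $g_j(\bar x+t(d+v))<0\le 0$. Intersecting the finitely many resulting thresholds over all $j$ gives a single $\delta_v>0$ that works simultaneously.

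One subtlety I would flag: if the active gradient set is empty (i.e.\ $\bar x$ is interior and $J(\bar x)=\emptyset$), then $K=\{0\}$, $d=0$ by \Cref{def:circdirection}, and the condition $\norm{v}<\norm{d}^2=0$ is vacuous, so there is nothing to prove in that degenerate case; the statement is about genuinely boundary points. I would also note where the Slater condition enters. For the purely convex (non-polyhedral) setting, convexity of $g_j$ guarantees $g_j(\bar x+t(d+v))\le g_j(\bar x)+t\scal{\nabla g_j(\bar x)}{d+v}$ is \emph{not} automatic in the wrong direction, so one really does need the differentiable first-order expansion \eqref{eq:taylor-active} together with strict negativity of the directional derivative, rather than any global polyhedral identification as in \Cref{cor:Ax le b}. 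The hardest part of the argument, and the reason the hypothesis is strict ($\norm{v}<\norm{d}^2$) here whereas it was non-strict in \Cref{cor:Ax le b}, is exactly this: in the polyhedral corollary one can use the exact local coincidence of $\Omega$ with its tangent cone, so a non-strict descent inequality suffices, but in the curved convex case the boundary bends away and only a \emph{strict} first-order descent can guarantee feasibility is maintained for small positive $t$ against the $o(t)$ curvature correction. Establishing that strict descent cleanly, and confirming it survives the perturbation by $v$ on the open ball, is the main technical obstacle.
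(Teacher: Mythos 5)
Your route is genuinely different from the paper's: the paper never Taylor-expands anything, but instead handles $d=0$ trivially and then invokes the standard facts that, under Slater, $\Kpolar$ for $K\coloneqq\cone(\{\nabla g_j(\bar x)\}_{j\in J(\bar x)})$ \emph{is} the tangent cone of $\Omega$ at $\bar x$ and has nonempty interior, so that $d+v\in\inte(\Kpolar)$ (which follows from \Cref{Thm:circumcentric_d_cones} once $\norm{v}<\norm{d}^2$) is automatically a feasible direction. Your per-constraint argument (continuity for inactive indices, strict first-order descent for active ones) is more self-contained, but it has a genuine gap: the key claim $\scal{\nabla g_j(\bar x)}{d+v}<0$ for every active $j$ is not established, and the place it can fail is exactly where Slater must be used --- yet your proof never actually invokes Slater (your closing discussion of ``where Slater enters'' does not correspond to any step of the argument). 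Nothing you wrote rules out $\nabla g_j(\bar x)=0$ for some active $j$; in that case $\scal{\nabla g_j(\bar x)}{d+v}=0$, the Taylor step collapses, and by convexity $\bar x$ is then a global minimizer of $g_j$ with $g_j(\bar x)=0$, so $g_j\ge 0$ near $\bar x$ and feasibility genuinely fails. Concretely, take $g_1(x)=x_1^2$, $g_2(x)=-x_2$ on $\RR^2$ and $\bar x=(0,0)$: both constraints are active, $\nabla g_1(\bar x)=0$, $K=\cone(\{(0,-1)\})$, $d=(0,1)\neq 0$, and for $v=(\tfrac{1}{2},0)$ one has $\norm{v}<\norm{d}^2$ but $g_1(\bar x+t(d+v))=t^2/4>0$ for every $t>0$. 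Slater of course fails in this example, and the repair in your proof is one sentence: under Slater, an active index with vanishing gradient would make $\bar x$ a global minimizer of the convex function $g_j$ with value $0$, contradicting $g_j(\hat x)<0$; hence all active gradients are nonzero.

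A second, smaller gap sits in the same step: your sharpened Cauchy--Schwarz estimate gives strict negativity $\scal{d+v}{u^i}\le -(\norm{d}^2-\norm{v})<0$ only for the vectors $u^i$ of a normalized conic base $B_K$, whereas an active gradient need not be a positive multiple of a base vector (the paper's own example in \Cref{sec:concluding} shows generators can lie outside the base). You need the transfer line: every nonzero $z\in K$ is a conic combination $z=\sum_i\lambda_i u^i$ with some $\lambda_i>0$, hence $\scal{d+v}{z}\le -\bigl(\sum_i\lambda_i\bigr)\bigl(\norm{d}^2-\norm{v}\bigr)<0$; applied to the (nonzero, by the previous paragraph) active gradients this yields the strict descent you want. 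With these two additions your argument is correct, and it is a legitimate alternative to the paper's tangent-cone proof: yours avoids citing the Slater-implies-tangent-cone identification from \cite{Bertsekas:2003}, at the price of doing the local analysis by hand and of having to see precisely why Slater is indispensable.
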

\begin{proof}
Set $K:=\cone (\{\nabla g_j(\bar x)\}_{j\in J(\bar x)})$. If $d:=d_{\circum}(K)$ is zero, then the result is trivial, so, assume $d\neq 0$. As Slater condition is a constrained qualification, it guarantees that $\Kpolar$ coincides with the tangent cone of $\Omega$ at $\bar x$ and also that $\Kpolar$ has nonempty interior; see condition CQ5c and subsequent comments in  \cite[pg.~307]{Bertsekas:2003} . Hence, any direction in the interior of $\Kpolar$, $\inte(\Kpolar)$, is a feasible direction for $\Omega$. On the other hand,  \Cref{Thm:circumcentric_d_cones} implies that $d+v\in \inte(\Kpolar)$ if $\|v\|< \|d\|^2$. Thus, $d+v$ is an interior feasible direction for $\Omega$ at $\bar x$, and the result follows.
\end{proof}

Getting inward directions to a convex region is a recurrent task in several optimization problems. We have derived a novel and quite straightforward manner to do this by introducing circumcentric directions of finitely generated cones. 

The computation of circumcentric directions both in the polyhedral case of \Cref{cor:Ax le b}   and in the more general convex setting of  \Cref{cor:g(x) le 0} relies on two tasks: obtaining a conic base of a suitable polyhedral cone; and solving the linear system of equations \cref{eq:CircComputation}. Active equations, as mentioned in the aforementioned  results, provide generators for the desired cone. Nevertheless, the first task  might be challenging  in the presence of  redundancy or degeneracy. Actually, getting rid of redundant or degenerated constraints can be  computationally expensive; for more details, see \cite[Sec.~4.5]{Bazaraa:2009}. Once a conic base is available, concluding the second task is easy as it involves a well understood solvable linear system of equations with  unique solution \cite{Bauschke:2018}.    

In the next section, we present some remarks on our theory.

\section{Discussion of results}\label{sec:concluding}

We begin this section by pointing out the importance of considering a conic base in the definition of the circumcentric  for it to possess a genuine \emph{geometric} characterization. If one considers a set of generators instead of a base we may have ambiguity. Indeed, see the example below.
\begin{example}\label[example]{eq:1}
  Let $K\subset\RR^{3}$ be the cone generated by \[S \coloneqq \left\{(1,0,0), (0,1,0), (0,0,1), (\tfrac{\sqrt{2}}{2},\tfrac{\sqrt{2}}{2},0)\right\}.\]
We have that $K = \cone(S)  =  \RR^{3}_{+}$ and the  only normalized conic base of $K$ is given by \[B_{K} = \{(1,0,0), (0,1,0), (0,0,1)\}.
\]
However, the circumcentric direction of $K$ satisfies \[d=d_{\circum}(K)= -\circum(B_{K}) = -\tfrac{1}{3}\left(1,1,1\right)\neq (0,0,0) = -\circum(S).\]
\end{example}

We have seen in this paper that $d$ always lies in $\Kpolar$. In the previous example $-d$ coincidentally belongs to $K$, and it is the barycenter of the convex hull of $B_K$. Nevertheless, these two properties do not hold in general, as pointed out later in \Cref{ex:2}

Our next remark is on the Slater assumption in \Cref{cor:g(x) le 0}. It is well-known that the Slater condition is equivalent to the Mangasarian-Fromovitz constraint qualification (MFCQ) for convex inequalities~\cite[Prop. 3.3.8 and 3.3.9]{Bertsekas:1999}. In turn, MFCQ is equivalent to asking for positive linear independence of active gradients. With those facts in mind and taking into account the following proposition, we necessarily would have a zero circumcentric direction in \Cref{cor:g(x) le 0} without the existence of a Slater point.   

\begin{proposition}\label[proposition]{prop:circdirnot0}
Let $S\coloneqq \{u^{1}, u^{2}, \ldots, u^{p}\}\subset\RR^{n},$ where $p$ is a positive integer, and assume that all the vectors $u^i$, $i=1,\ldots,p$, have the same length $\eta\geq 0$. If $S$ is not positively linearly independent, then $\circum(S)= 0$.  
\end{proposition}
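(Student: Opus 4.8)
The plan is to combine item~(ii) of \Cref{lema:circum} with the nonnegative dependence relation furnished by the hypothesis; the whole argument reduces to a single inner-product pairing. First I would invoke the lemma: since all the vectors $u^i$ share the common length $\eta$, the circumcenter $c\coloneqq\circum(S)$ is proper, and item~(ii) gives the crucial orthogonality-type identity $\scal{c}{u^i}=\norm{c}^2$ for every $i=1,\ldots,p$.

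Next I would unpack the hypothesis. The set $S$ \emph{failing} to be positively linearly independent means there exist scalars $\lambda_1,\ldots,\lambda_p\geq 0$, not all zero, with $\sum_{i=1}^{p}\lambda_i u^i=0$. Because these coefficients are nonnegative and not all vanish, their total $\Lambda\coloneqq\sum_{i=1}^{p}\lambda_i$ is strictly positive; this is the one small point that needs to be checked, though it is immediate.

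The key and essentially only computation is then to pair $c$ against the dependence relation. On one hand $\scal{c}{\sum_{i=1}^{p}\lambda_i u^i}=\scal{c}{0}=0$. On the other hand, expanding by bilinearity and substituting item~(ii), $\scal{c}{\sum_{i=1}^{p}\lambda_i u^i}=\sum_{i=1}^{p}\lambda_i\scal{c}{u^i}=\bigl(\sum_{i=1}^{p}\lambda_i\bigr)\norm{c}^2=\Lambda\norm{c}^2$. Equating the two evaluations yields $\Lambda\norm{c}^2=0$, and since $\Lambda>0$ this forces $\norm{c}=0$, whence $\circum(S)=c=0$, as desired.

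I do not anticipate any real obstacle: once item~(ii) of \Cref{lema:circum} is available, the result is a one-line pairing, with the verification $\Lambda>0$ being the only routine check. I would also remark that the degenerate case $\eta=0$ is subsumed automatically, since then every $u^i=0$ and item~(ii) still reads $\scal{c}{u^i}=\norm{c}^2$, so the same computation applies verbatim.
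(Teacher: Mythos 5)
Your proof is correct, but it takes a different route from the paper's. The paper argues via item~(i) of \Cref{lema:circum}: it normalizes the nonnegative dependence relation (dividing by $\Lambda>0$, exactly the check you make) to exhibit $0$ as an \emph{affine} combination of the $u^i$'s, concludes $0\in\aff(S)$, and then gets $\circum(S)=P_{\aff(S)}(0)=0$ because the projection of a point lying in the set is the point itself; the degenerate case $\eta=0$ is dispatched separately at the start. You instead use item~(ii) and a bilinearity pairing: $0=\scal{c}{\sum_i\lambda_i u^i}=\Lambda\norm{c}^2$, forcing $c=0$. Both arguments hinge on the same observation that $\Lambda>0$, and item~(ii) is itself derived from item~(i) in the lemma's proof, so the two routes are cousins rather than strangers. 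What the paper's version buys is the slightly stronger geometric fact $0\in\aff(S)$ (which, given item~(i), is \emph{equivalent} to $\circum(S)=0$); what yours buys is a purely algebraic one-line computation that never touches the projection characterization and absorbs the $\eta=0$ case with no separate treatment. Either proof is acceptable.
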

\begin{proof}
If the length of the vectors is zero, the result follows trivially. So, let us assume that their length is positive. Recall that $\circum(S)$ is well defined and characterized as $\circum(S) = P_{\aff(S)}(0)$ due to \Cref{lema:circum}(i). Suppose that $S$ is not positively linearly independent. Then, there exist nonnegative scalars $\alpha_1,\ldots,\alpha_p$ where at least one of them is strictly positive such that $\alpha_1 u^{1}+\cdots + \alpha_p u^{p}=0$.
Since, $\sum_{i=1}^p \alpha_i>0$, we can divide the previous equality by this sum getting
\[\frac{\alpha_1}{\sum_{i=1}^p \alpha_i} u^{1}+\cdots + \frac{\alpha_p}{\sum_{i=1}^p \alpha_i} u^{p}=0.\] By setting $\beta_j\coloneqq\frac{\alpha_j}{\sum_{i=1}^p\alpha_i}$, we clearly have $\beta_j\ge0$ for $j=1,\ldots,p$ and $\sum_{j=1}^p\beta_j=1$. Hence, $0$ belongs to $\aff(S)$ and thus $\circum(S) = P_{\aff(S)}(0)=0$. 
\end{proof}
We observe that the converse of  \Cref{prop:circdirnot0} is not true. Take, for instance, $S=\{v,-v\}$ where $v$ is an arbitrary unit vector in $\RR^n$. Obviously, $\circum(S)=0$. Nonetheless, $v$ and $-v$ are positively linearly independent. Next, we prove that if $\circum(S)\neq 0$ then $\cone(S)$ is pointed.


\begin{proposition}\label[proposition]{prop:conebase-dcirc} 
Let $K\subset \RR^n$ be a finitely generated cone. If $d_{\circum}(K)\neq 0$, then $K$ is pointed.  
\end{proposition}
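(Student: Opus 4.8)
The plan is to prove the contrapositive: I would assume that $K$ is \emph{not} pointed, meaning $K$ contains a line, and show that this forces $d_{\circum}(K) = 0$. The key observation is that if $K$ contains a line, then there is a nonzero vector $w$ such that both $w\in K$ and $-w\in K$. Since $K$ is a finitely generated cone with conic base $B_K = \{u^1,\ldots,u^p\}$, both $w$ and $-w$ are conic combinations of the generators. My first step would be to translate ``$K$ contains a line'' into a linear-algebraic statement about the generators, with the goal of showing that $B_K$ is \emph{not} positively linearly independent, so that I may invoke \Cref{prop:circdirnot0} to conclude $\circum(B_K) = 0$, hence $d_{\circum}(K) = -\circum(B_K) = 0$.

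Concretely, writing $w = \sum_{i=1}^p \lambda_i u^i$ and $-w = \sum_{i=1}^p \mu_i u^i$ with all $\lambda_i,\mu_i \ge 0$, adding these two expressions gives
\[
0 = w + (-w) = \sum_{i=1}^p (\lambda_i + \mu_i)\, u^i,
\]
where each coefficient $\lambda_i + \mu_i \ge 0$. To apply \Cref{prop:circdirnot0}, I must ensure that at least one coefficient $\lambda_i + \mu_i$ is strictly positive, i.e.\ that the combination is nontrivial. This holds because $w \ne 0$, so not all $\lambda_i$ can vanish, forcing some $\lambda_i + \mu_i > 0$. Thus $B_K$ satisfies exactly the hypothesis of \Cref{prop:circdirnot0}: the generators all have the same length (norm one, as $B_K$ is a normalized conic base) and they are not positively linearly independent. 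Therefore $\circum(B_K) = 0$, and so $d_{\circum}(K) = 0$, completing the contrapositive.

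The main point requiring care, and the step I would expect to be the principal obstacle, is justifying that ``$K$ not pointed'' truly yields a nonzero $w$ with $w, -w \in K$ together with the nontriviality of the resulting conic combination. The definition of pointedness (``contains no lines'') needs to be unpacked: a cone contains a line through the origin precisely when there is $w\neq 0$ with $\pm w \in K$, and this is the standard characterization I would cite or briefly verify. Once this equivalence is in hand, the argument is a direct reduction to the already-established \Cref{prop:circdirnot0}. One subtlety worth a remark is that the conclusion relies on using a \emph{normalized conic base} in the definition of $d_{\circum}(K)$ so that the equal-length hypothesis of \Cref{prop:circdirnot0} is automatically met; this is exactly how \Cref{def:circdirection} sets things up, so no extra work is needed there.
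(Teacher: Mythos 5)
Your proof is correct, but it takes a genuinely different route from the paper's. The paper argues \emph{directly}: if $d_{\circum}(K)\neq 0$, then \Cref{Thm:circumcentric_d_cones} puts an entire ball of radius $\norm{d_{\circum}(K)}^2>0$ around the circumcentric direction inside $\Kpolar$, so $\inte(\Kpolar)\neq\emptyset$, and pointedness follows from the classical duality fact that a closed convex cone is pointed if and only if its polar has nonempty interior. You instead prove the \emph{contrapositive}, reducing to \Cref{prop:circdirnot0}: a line in $K$ yields $w\neq 0$ with $\pm w\in K$, and adding the two conic representations of $w$ and $-w$ exhibits a nontrivial nonnegative vanishing combination of the normalized base $B_K$, so $\circum(B_K)=0$. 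Each approach has its merits. The paper's proof is a two-line argument that showcases the interiorness measure (the paper's main theorem), but it outsources a nontrivial convex-geometry equivalence to the literature. Your proof is more elementary and self-contained within the paper: it needs only \Cref{prop:circdirnot0}, basic linear algebra, and the standard characterization of non-pointedness ($K$ contains a line if and only if $\pm w\in K$ for some $w\neq 0$), whose forward direction does require the closedness of finitely generated cones --- a fact the paper records, and which you rightly flag as the one step needing verification. Your route also exposes the underlying mechanism more transparently: a line in $K$ forces $0\in\aff(B_K)$, which is exactly what annihilates the circumcenter. Neither argument is circular: \Cref{prop:circdirnot0} precedes the statement in the paper, and \Cref{Thm:circumcentric_d_cones} does as well.
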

\begin{proof}

If $d_{\circum}(K)\neq 0$, \Cref{Thm:circumcentric_d_cones} guarantees that $-d_{\circum}(K)\in \inte(\Kpolar)$. Hence, $\inte(\Kpolar)$ is nonempty. Bearing in mind that a closed convex cone is pointed if, and only if, its polar has nonempty interior, we have that $K$ is pointed.    
\end{proof}


We show in the following example that the converse of \Cref{prop:conebase-dcirc} does not hold. 

\begin{example}\label[example]{ex:2}
Take
\[u^1\coloneqq (0,\tfrac{\sqrt{2}}{2},\tfrac{\sqrt{2}}{2}), \quad u^2\coloneqq (\tfrac{\sqrt{2}}{2},0,\tfrac{\sqrt{2}}{2}),\quad u^3\coloneqq (-\tfrac{1}{2},\tfrac{1}{2},\tfrac{\sqrt{2}}{2}),\] and set $S\coloneqq \{u^1,u^2,u^3\}$. It is easy to verify that $d\coloneqq\circum(S)=(0,0,\tfrac{\sqrt{2}}{2})$, and  that $d$ does not lie in the convex hull of $S$. This  implies that the set of unit vectors $\bar S\coloneqq \{u^1, u^2, u^3, u^4\}$ with $u^4\coloneqq\tfrac{d}{\|d\|}=(0,0,1)$, is positive linearly independent. Therefore, $\bar S$ form a conic base for $\cone(\bar S)$. However, despite $\cone(\bar S)$ being pointed, $\circum(\bar S)=0$. 
\end{example}

Although the reciprocal of \Cref{prop:conebase-dcirc} is not true, we have a special class of pointed cones for which the circumcentric direction must be nonzero. The next result will lead to a corollary providing  this class. 

\begin{proposition}\label[proposition]{prop:linearInd-base} 
Let $S=\{u^1,\ldots,u^p\}$ be a set of linearly independent unit vectors. Then,  $\circum(S)\neq 0$. In particular, $\cone(S)$ is pointed.  
\end{proposition}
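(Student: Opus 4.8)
The plan is to reduce the claim $\circum(S)\neq 0$ to a statement about the affine hull of $S$ and then invoke linear independence. Since $S$ consists of unit vectors, its elements share the common length $\eta=1$, so Lemma \ref{lema:circum}(i) applies and yields the characterization $\circum(S)=P_{\aff(S)}(0)$. The orthogonal projection of a point onto a closed set returns that point exactly when the point already lies in the set; hence $\circum(S)=0$ if and only if $0\in\aff(S)$. In this way the first assertion reduces entirely to showing that $0\notin\aff(S)$.

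To establish $0\notin\aff(S)$, I would write $\aff(S)=\{\sum_{i=1}^p\lambda_i u^i \mid \sum_{i=1}^p\lambda_i=1\}$ and argue by contradiction. If $0\in\aff(S)$, there would exist scalars $\lambda_1,\ldots,\lambda_p$ with $\sum_i\lambda_i=1$ and $\sum_i\lambda_i u^i=0$. But linear independence of $u^1,\ldots,u^p$ forces every $\lambda_i=0$, whence $\sum_i\lambda_i=0$, contradicting $\sum_i\lambda_i=1$. Therefore $0\notin\aff(S)$ and $\circum(S)=P_{\aff(S)}(0)\neq 0$, which proves the first part.

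For the ``in particular'' statement I would connect back to Proposition \ref{prop:conebase-dcirc}, which asserts that a nonzero circumcentric direction implies pointedness. The small bridge to cross is that $d_{\circum}(K)$ for $K\coloneqq\cone(S)$ is defined via a \emph{normalized conic base}, so I must verify that $S$ itself qualifies. Linear independence does this cleanly: no $u^i$ can be written as a conic (indeed, even linear) combination of the remaining vectors, so none can be discarded without shrinking the cone; hence $S$ is a minimal generating set, i.e.\ a conic base, and since its vectors are unit vectors it is normalized. Consequently $d_{\circum}(K)=-\circum(S)\neq 0$, and Proposition \ref{prop:conebase-dcirc} delivers that $K$ is pointed.

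I do not anticipate a genuine obstacle, as the argument is short once the projection characterization of Lemma \ref{lema:circum}(i) is in hand. The only point demanding care is the verification in the last paragraph that $S$ serves as the conic base needed to invoke Proposition \ref{prop:conebase-dcirc}. If one prefers to sidestep this, pointedness can be proved directly: if $x\in\cone(S)\cap(-\cone(S))$, writing $x=\sum_i\lambda_i u^i=-\sum_i\mu_i u^i$ with all coefficients nonnegative gives $\sum_i(\lambda_i+\mu_i)u^i=0$, and linear independence forces each $\lambda_i=\mu_i=0$, so $x=0$.
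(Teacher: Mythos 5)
Your proof is correct and takes essentially the same route as the paper's: Lemma \ref{lema:circum}(i) reduces $\circum(S)\neq 0$ to showing $0\notin\aff(S)$, which linear independence excludes via the same contradiction (affine coefficients summing to $1$ cannot all vanish), and pointedness then follows from Proposition \ref{prop:conebase-dcirc}. Your explicit check that $S$ is a normalized conic base of $\cone(S)$ --- needed so that $d_{\circum}(\cone(S))=-\circum(S)$ and Proposition \ref{prop:conebase-dcirc} genuinely applies --- together with the alternative direct pointedness argument, supplies care that the paper's proof leaves implicit, so it is a refinement rather than a divergence.
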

\begin{proof}
Suppose that $\circum(S)= 0$. Then, \Cref{lema:circum}(i) implies $0\in\aff(S)$, which means that there exist scalars $\beta_1,\ldots, \beta_p$ such that $\sum_{i=1}^p\beta_i=1$ and
\[0=\beta_1u^1+\cdots+\beta_pu^p.\] This contradicts the linear independence of $u^1,\ldots,u^p$. Hence, $\circum(S)\neq 0$, which guarantees that $\cone(S)$ is pointed in view of \Cref{prop:conebase-dcirc}.
\end{proof}

\begin{corollary}\label[corollary]{prop:linearInd-pointed} 
Let $K\subset \RR^n$ be a finitely generated cone, $B_K=\{u^1,\ldots,u^p\}$ be a unitary conic base of $K$ and assume that $B_K$ is linearly independent. Then, $K$ is pointed  and $d_{\circum}(K)\neq 0$.   
\end{corollary}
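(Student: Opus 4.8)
The plan is to prove the contrapositive-free statement directly by reducing it to \Cref{prop:linearInd-base}. The key observation is that the hypotheses supply us with a unitary conic base $B_K=\{u^1,\ldots,u^p\}$ of $K$ with $p\le n$, and the conclusion $d_{\circum}(K)\neq 0$ is, by \Cref{def:circdirection}, equivalent to $\circum(B_K)\neq 0$. So the entire task is to show that the vectors in $B_K$ are linearly independent; once that is established, \Cref{prop:linearInd-base} immediately yields $\circum(B_K)\neq 0$ and hence $d_{\circum}(K)=-\circum(B_K)\neq 0$.

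First I would argue that linear independence of a conic base follows from pointedness. Suppose, for contradiction, that $u^1,\ldots,u^p$ are linearly dependent, so there exist scalars $\gamma_1,\ldots,\gamma_p$, not all zero, with $\sum_{i=1}^p \gamma_i u^i = 0$. Splitting the index set into $I_+=\{i\mid \gamma_i>0\}$ and $I_-=\{i\mid \gamma_i<0\}$, I would rewrite this as $\sum_{i\in I_+}\gamma_i u^i = \sum_{i\in I_-}(-\gamma_i)u^i=:w$, a vector that lies in $K$ (both sides are conic combinations of base vectors). If $w\neq 0$, then $w$ and $-w$ both belong to $K$, so $K$ contains the line $\mathbb{R}w$, contradicting pointedness. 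If $w=0$, then the positive combination $\sum_{i\in I_+}\gamma_i u^i=0$ exhibits a nontrivial conic dependence among the base vectors, which would let me express one generator as a conic combination of the others and thereby produce a proper generating subset—contradicting the minimality of the cardinality of a conic base. Either way we reach a contradiction, so $B_K$ must be linearly independent.

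The step I expect to require the most care is the dichotomy in the dependence argument, specifically ruling out the $w=0$ case cleanly. The point is that a conic \emph{base} is defined as a generating set of minimal cardinality, so I must verify that a nontrivial relation $\sum_{i\in I_+}\gamma_i u^i=0$ with all $\gamma_i>0$ genuinely contradicts minimality; this is where I would isolate some $u^{i_0}$ with $i_0\in I_+$, write $u^{i_0}=\sum_{i\in I_+\setminus\{i_0\}}(\gamma_i/\gamma_{i_0})\,u^i$ as a conic combination, and conclude that $B_K\setminus\{u^{i_0}\}$ already generates $K$, violating minimality. It is worth noting that the hypothesis $p\le n$ is not actually needed for the implication itself—a unitary conic base of a pointed finitely generated cone is always linearly independent—but it is consistent with the statement and harmless to retain.

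\begin{proof}
Let $B_K=\{u^1,\ldots,u^p\}$ be the unitary conic base of $K$ given in the hypothesis. By \Cref{def:circdirection}, $d_{\circum}(K)=-\circum(B_K)$, so it suffices to show $\circum(B_K)\neq 0$. In view of \Cref{prop:linearInd-base}, it is enough to prove that $u^1,\ldots,u^p$ are linearly independent. Suppose, for contradiction, that they are not; then there exist scalars $\gamma_1,\ldots,\gamma_p$, not all zero, with $\sum_{i=1}^p \gamma_i u^i=0$. Set $I_+\coloneqq\{i\mid \gamma_i>0\}$ and $I_-\coloneqq\{i\mid \gamma_i<0\}$, and let $w\coloneqq\sum_{i\in I_+}\gamma_i u^i=\sum_{i\in I_-}(-\gamma_i)u^i$. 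Both expressions are conic combinations of vectors in $B_K$, so $w\in K$ and $-w\in K$. If $w\neq 0$, then $K$ contains the line $\{tw\mid t\in\RR\}$, contradicting the pointedness of $K$. If $w=0$, then $I_+\neq\emptyset$ (since not all $\gamma_i$ vanish and $w=0$ forces a genuine cancellation), and picking any $i_0\in I_+$ we may write $u^{i_0}=\sum_{i\in I_+\setminus\{i_0\}}\frac{\gamma_i}{\gamma_{i_0}}u^i$, a conic combination of the remaining base vectors; hence $\cone(B_K\setminus\{u^{i_0}\})=K$, contradicting the minimality of the cardinality of $B_K$. In either case we obtain a contradiction, so $u^1,\ldots,u^p$ are linearly independent. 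By \Cref{prop:linearInd-base}, $\circum(B_K)\neq 0$, and therefore $d_{\circum}(K)\neq 0$.
\end{proof}
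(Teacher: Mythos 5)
Your overall strategy---reduce to \Cref{prop:linearInd-base} by showing the unitary conic base is linearly independent---is exactly the paper's, but your argument for linear independence contains a non sequitur, and it sits in the $w\neq 0$ branch rather than the $w=0$ branch you flagged as delicate. From $\sum_{i\in I_+}\gamma_i u^i=\sum_{i\in I_-}(-\gamma_i)u^i\eqqcolon w$ you conclude that ``$w\in K$ and $-w\in K$.'' That does not follow: both sides of the identity are conic combinations of base vectors, so both of them certify $w\in K$ --- twice; neither says anything about $-w$. A linear dependence relation only produces one vector of $K$ admitting two different conic representations, which is perfectly compatible with pointedness. Since the case of a base that is linearly dependent yet positively linearly independent always lands in the $w\neq 0$ branch, the proof collapses precisely where its real content should be.

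Moreover, the ``bonus'' claim that $p\le n$ is not needed---i.e., that a unitary conic base of a pointed finitely generated cone is always linearly independent---is false, and the paper itself refutes it immediately after the corollary: the set $\bar S=\{u^1,u^2,u^3,u^4\}\subset\RR^3$ given there is positively linearly independent, hence a unitary conic base of the pointed cone $\cone(\bar S)$, yet four vectors in $\RR^3$ are necessarily linearly dependent, and indeed $\circum(\bar S)=0$. So if your argument were valid it would prove a statement the paper explicitly shows to be false; this confirms the step above cannot be repaired without invoking $p\le n$. What pointedness (equivalently, your $w=0$/minimality branch) actually delivers is \emph{positive} linear independence of the base---no nontrivial nonnegative combination of the $u^i$ vanishes---which is strictly weaker than linear independence. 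The paper's one-line proof leans on exactly the fact you tried to dispense with: that a conic base of a pointed finitely generated cone consisting of \emph{at most $n$} vectors must be linearly independent; the cardinality bound is the essential ingredient, not a harmless extra hypothesis. (A small additional slip: in your $w=0$ case, $I_+$ may well be empty, e.g.\ when all nonzero $\gamma_i$ are negative; one must pass to $I_-$ by symmetry, though this is cosmetic compared with the main gap.)
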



In this paper, we have embedded the circumcenter in a new setting. Generalized circumcenters serve now as inward directions to convex sets.  The fact that circumcentric directions point towards the interior of a given convex set together with our explicit measure of its interiorness are very attractive features that may have an impact in practical algorithmic implementations. In this regard, one of our ideas for the future is to explore circumcenters as search directions within methods for both smooth and nonsmooth convex optimization. Moreover, based on \Cref{lema:circum}(i) we intend to extend the notion of circumcenters of finitely generated cones to more general cones.

\bibliographystyle{spmpsci}

\bibliography{refs}

\end{document}